\documentclass{amsart}
\usepackage[utf8]{inputenc}
\usepackage{amssymb}
\usepackage{graphicx}
\usepackage{amsmath}
\usepackage[utf8]{inputenc}
\usepackage[english]{babel}
\usepackage{mathptmx}
\usepackage[11pt]{moresize}
\usepackage{amsthm}
\newcommand\numberthis{\addtocounter{equation}{1}\tag{\theequation}}
\newtheorem{theorem}{Theorem}[section]

\newtheorem{lemma}{Lemma}[section]

\title{Integral Ricci curvature bounds for possibly collapsed spaces with Ricci curvature bounded from below }
\author{Michael Smith }
\date{}
\email{doctorq@berkeley.edu}
\address{UC Berkeley, Berkeley, CA 94720}
\begin{document}
\begin{abstract}
Assuming a lower bound on the Ricci curvature of a complete Riemannian manifold, for $p< 1/2$ we show the existence of bounds on the local $L^p$ norm of the Ricci curvature that depend only on the dimension and which improve with volume collapse. 
\end{abstract}
\maketitle

\section{Introduction}
The main result of this paper is the following:
\begin{theorem}
Let $(M^n,g)$ be a complete Riemannian manifold and $p_0\in M$. Assume the Ricci curvature satisfies $Ric\geq-1$ on the ball $B(p_0,5)$. Then for any $0<\epsilon<1/2$ there is a $C=C(\epsilon,n)$ such that
$$||Ric||^{\frac{1}{2}-\epsilon}_{L^{1/2-\epsilon}(B(p_0,1))}\leq CVol(B(p_0,1))^{2\epsilon}$$
\end{theorem}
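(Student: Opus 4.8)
\emph{Proof proposal.} The plan is to reduce the bound on the full Ricci tensor to a bound on the radial Ricci curvature along minimizing geodesics, estimate the latter by the Bochner formula, and then sum over all geodesics by an integral–geometric argument; the exponent restriction $p:=\tfrac12-\epsilon<\tfrac12$ will surface as an integrability threshold. First I would reduce $\|Ric\|$ to the quantity $Ric_+(v,v):=\max(Ric(v,v),0)$. Since $Ric\geq-1$ on $B(p_0,5)$, at each point the eigenvalues $\lambda_1\leq\cdots\leq\lambda_n$ of $Ric$ satisfy $|\lambda_i|\leq\lambda_n^{+}+1$ where $\lambda_n^{+}=\max(\lambda_n,0)$, so $\|Ric\|\leq\sqrt n\,(\lambda_n^{+}+1)$, and since $0<p<1$,
\[
\|Ric\|^{p}\ \leq\ n^{p/2}\big((\lambda_n^{+})^{p}+1\big).
\]
Testing $Ric$ against the unit directions lying within a fixed dimensional angle of the top eigenvector gives $(\lambda_n^{+})^{p}\leq C(n)\,\frac{1}{|S_xM|}\int_{S_xM}Ric_+(v,v)^{p}\,dv+C(n)$, where $S_xM$ is the unit sphere in $T_xM$. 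Integrating over $B(p_0,1)$ and using that Bishop–Gromov bounds $Vol(B(p_0,1))$ by a dimensional constant (so $C(n)Vol(B(p_0,1))\leq C(n)Vol(B(p_0,1))^{1-2p}$), it then suffices to prove
\[
\int_{SB(p_0,1)}Ric_+(v,v)^{p}\,d\mu(v)\ \leq\ C(n,p)\,Vol(B(p_0,1))^{1-2p},
\]
where $SB(p_0,1)$ is the unit tangent bundle over $B(p_0,1)$ and $d\mu=d\sigma_x(v)\,dV(x)$ is the geodesic–flow invariant (Liouville) measure.

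The heart of the matter is the Bochner estimate along a minimizing geodesic. Fixing $q\in B(p_0,3)$, writing $r_q=d(q,\cdot)$, $\gamma_\theta(t)=\exp_q(t\theta)$ and $m(t)=\Delta r_q(\gamma_\theta(t))$ (smooth on $0<t<c(\theta)$, with $c(\theta)$ the cut distance), the Bochner formula gives $m'=-|\mathrm{Hess}\,r_q|^{2}-Ric(\gamma_\theta',\gamma_\theta')$, hence
\[
Ric(\gamma_\theta',\gamma_\theta')\ =\ -m'-|\mathrm{Hess}\,r_q|^{2}\ \leq\ -m'-\tfrac{m^{2}}{n-1}\ \leq\ -m',
\]
so $\int_a^b Ric(\gamma_\theta',\gamma_\theta')\,dt\leq m(a)-m(b)$. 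Two consequences of $Ric\geq-1$ bound $m$ from both sides on the middle portion of the geodesic: the Laplacian comparison theorem gives $m(t)\leq\sqrt{n-1}\coth(t/\sqrt{n-1})\leq C(n)/\min(t,1)$, while the Riccati inequality $m'\leq-\tfrac{m^{2}}{n-1}+1$ forces $m(t)\geq-C(n)/\min(c(\theta)-t,1)$, since otherwise $m$ would blow down to $-\infty$ — i.e. a conjugate point would occur — before $c(\theta)$. Taking $[a,b]$ to be the middle third of $(0,\min(c(\theta),3))$ and combining $Ric_+\leq Ric+1$ with Hölder's inequality ($p<1$) yields
\[
\int_{a}^{b}Ric_+(\gamma_\theta'(t),\gamma_\theta'(t))^{p}\,dt\ \leq\ \Big(\int_a^b Ric_+(\gamma_\theta',\gamma_\theta')\,dt\Big)^{p}(b-a)^{1-p}\ \leq\ C(n,p)\,\min\!\big(c(\theta),1\big)^{1-2p}.
\]
The exponent $1-2p$ appearing here is the manifestation of the hypothesis $p<\tfrac12$: equivalently, trying to integrate the cruder bound $Ric_+(\gamma_\theta',\gamma_\theta')^{p}\lesssim(-m')_+^{p}$ all the way to the cut point would require $\int(c(\theta)-t)^{-2p}\,dt<\infty$, i.e. $2p<1$, because $m\sim-k\,(c(\theta)-t)^{-1}$ at a conjugate cut point of multiplicity $k$ (the singular parts of $-m'$ and $\tfrac{m^2}{n-1}$ do not cancel).

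Next I would sum this estimate over $q\in B(p_0,3)$ and $\theta\in S_qM$, weighting the radial integral by the Jacobian $A_q(t,\theta)$ of $\exp_q$ (so $A_q\,dt\,d\theta=dV$ in polar coordinates), which Bishop–Gromov bounds by $C(n)$ for $t\leq3$. Every pair $(x,v)\in SB(p_0,1)$ is covered by the middle portions of the radial geodesics issuing from the base points $q$ lying on the backward geodesic through $(x,v)$ at distances comparable to a local scale $\rho(x)$ no larger than the injectivity radius; dividing out this covering multiplicity and using the flow–invariance of $d\mu$ to convert $d\sigma_q(\theta)\,dV(q)$ into $d\mu$, one is left with
\[
\int_{SB(p_0,1)}Ric_+(v,v)^{p}\,d\mu(v)\ \leq\ C(n,p)\int_{B(p_0,3)}\rho(x)^{-2p}\,dV(x).
\]
Finally, by the Cavalieri (layer–cake) formula $\int_{B(p_0,3)}\rho^{-2p}\,dV=2p\int_0^\infty\lambda^{-2p-1}\,|\{x\in B(p_0,3):\rho(x)<\lambda\}|\,d\lambda$, so the bound reduces to the thin–part volume estimate $|\{x\in B(p_0,3):\rho(x)<\lambda\}|\leq C(n)\min\big(\lambda,\,Vol(B(p_0,3))\big)$: splitting the $\lambda$–integral at $\lambda_*\sim Vol(B(p_0,3))$, the part $\lambda<\lambda_*$ contributes $\lesssim\int_0^{\lambda_*}\lambda^{-2p}\,d\lambda\sim\lambda_*^{\,1-2p}$ — finite precisely because $2p<1$ — and the part $\lambda>\lambda_*$ contributes $\lesssim Vol(B(p_0,3))\,\lambda_*^{-2p}$, both $\lesssim_p Vol(B(p_0,3))^{1-2p}$; a final application of Bishop–Gromov ($Vol(B(p_0,3))\leq C(n)Vol(B(p_0,1))$) completes the proof.

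The step I expect to be the main obstacle is the integral–geometric passage of the third paragraph: making rigorous the covering of $SB(p_0,1)$ by middle portions of radial geodesics with controlled multiplicity requires an adaptive choice of base point near regions where the injectivity radius degenerates, which is exactly where the interaction with collapse is delicate; a secondary difficulty is establishing the thin–part volume estimate of the last paragraph under only a lower Ricci bound. Both rest on carefully tracking the cut locus when no upper curvature bound is available.
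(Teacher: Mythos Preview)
Your reduction to $Ric_+$ and your Bochner/Riccati estimate along a minimizing geodesic are essentially equivalent to what the paper does (the paper phrases it via second variation with test functions $h^{(1+\mu)/2}$ vanishing at both endpoints, obtaining the uniform bound $\int_0^l Ric_+(\gamma',\gamma')^{1/2-\epsilon}\,dt\le C(\epsilon,n)$ for $l\le 2$, with the exponent threshold $p<1/2$ appearing in the same place). So the first half of your proposal is fine and matches the paper's Lemma~1.1.

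The divergence is in the integral--geometric half, and here there is a genuine gap. You reduce everything to a thin--part volume estimate $|\{x\in B(p_0,3):\rho(x)<\lambda\}|\le C(n)\min(\lambda,Vol)$ for a local scale $\rho$ bounded by the injectivity radius. Under only a lower Ricci bound this is not a known fact, and it is not clear it is any easier than the theorem itself; likewise the ``covering with controlled multiplicity'' you describe is exactly the step that breaks down when cut loci are uncontrolled. You flag both points honestly, but the proposal does not indicate how to get past them.

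The paper sidesteps both obstacles by never introducing $\rho$ at all. It works only with the set $W_1\subset SM$ of unit vectors whose geodesic is minimizing for time $\ge 1$, and proves two things. First, Croke's isoperimetric lemma (a lower bound on the \emph{average} Jacobian of $\exp$ along a minimizing segment) yields the Liouville--measure bound $Vol_{SM}(W_1)\le C(n)\,Vol(B(p_0,5))^2$. Second, after a preliminary rescaling to ensure $\mathrm{diam}(M)\ge 6$, Bishop--Gromov alone shows that for every $p\in B(p_0,1)$ the set of directions in $S_pM$ that lie on such long minimizing geodesics \emph{and} make angle $\ge\mu\sim Vol$ with the top Ricci eigenvector has $d_pu$--measure $\gtrsim Vol$. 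Integrating the uniform geodesic bound over $W_1$ and using Liouville invariance then gives
\[
Vol^{2}\ \gtrsim\ \int_{1}^{2}\!\!\int_{W_t} R(\phi|_t u)^{\frac12-\epsilon}\,du\,dt\ \gtrsim\ Vol\cdot Vol^{\,1-2\epsilon}\cdot\|Ric\|_{L^{1/2-\epsilon}(B(p_0,1))}^{\,1/2-\epsilon},
\]
which is the theorem. In effect, Croke's $Vol^{2}$ bound on long geodesics replaces your thin--part layer--cake, and the lower bound on good directions through each point replaces your covering--multiplicity argument; neither step refers to the injectivity radius.
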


Where the norm is defined as $$||f||_{L^{1/2-\epsilon}(B(p_0,1))}:=\Bigg(\int_{B_R(p_0,1)}(f)^{\frac{1}{2}-\epsilon}dvol\Bigg)^{\frac{1}{1/2-\epsilon}}$$ 
Note that the bound does not deteriorate as volume collapses. Petrunin \cite{petrunin} obtains an integral bound for the scalar curvature assuming a lower bound on the sectional curvature which also holds in the collapsed setting, and Cheeger and Naber \cite{cheegernaber} obtain integral bounds for the full Riemann curvature tensor for $p<2$ assuming both upper and lower bounds on the Ricci curvature, but with the assumption that the volume is sufficiently noncollapsed. 

The idea of the proof is the following: we show that we can achieve a similar bound to that in theorem 1.1 for the Ricci curvature along a geodesic. i.e.
\begin{lemma}
Let $M^n$ a Riemannian manifold. Let $\gamma:[0,l]\rightarrow M$ a minimizing geodesic parametrized by arc-length with $l\leq2$, and assume that $Ric\geq-1$ on the image of $\gamma$. Then
$$\int_0^l|Ric(\gamma'(t),\gamma'(t))|^{\frac{1}{2}-\epsilon}dt\leq C(\epsilon)$$
\end{lemma}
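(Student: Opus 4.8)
The plan is to reduce Lemma 1.2 to a one-dimensional inequality via the second variation formula, and then to exploit the fact that a minimizing geodesic has no interior conjugate points.

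First I would reduce to a scalar estimate. Since $\gamma$ is minimizing it also minimizes energy among curves with the same endpoints, so the index form $I(V,V)=\int_0^l\bigl(|V'|^2-\langle R(V,\gamma')\gamma',V\rangle\bigr)\,dt$ is nonnegative for every piecewise-$C^1$ field $V$ along $\gamma$ with $V(0)=V(l)=0$. Taking $V=f(t)E_i(t)$, where $f\in C_c^\infty(0,l)$ and $E_1,\dots,E_{n-1}$ is a parallel orthonormal frame of $(\gamma')^\perp$, and summing over $i$ produces
\[\int_0^l \rho\, f^2\,dt\ \le\ (n-1)\int_0^l (f')^2\,dt,\qquad \rho(t):=\mathrm{Ric}(\gamma'(t),\gamma'(t)),\]
for every such $f$. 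Because $\rho\ge-1$ and $l\le2$, the negative part of $\rho$ contributes at most $\int_0^l 1\,dt\le2$ to the integral in question, so it remains to bound $\int_0^l(\rho^+)^{1/2-\epsilon}\,dt$.

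Next, a local estimate: plugging into the inequality above a tent function equal to $1$ at a point $t_0$ and supported in $[t_0-\delta,t_0+\delta]\subset(0,l)$ (for which $\int(f')^2=2/\delta$, $f^2\ge\tfrac14$ on $[t_0-\delta/2,t_0+\delta/2]$, and $\int\rho^-f^2\le 2\delta$) and rearranging gives $\int_{t_0-\delta/2}^{t_0+\delta/2}\rho^+\,dt\le C(n)/\delta+C\delta$; morally, $\rho^+$ cannot be large on a large fraction of any subinterval sitting well inside $(0,l)$, for otherwise $\gamma$ would acquire a conjugate point.

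The crux is that near an endpoint of $\gamma$ the admissible scale $\delta$ above is forced down to the distance to that endpoint, so the local $L^1$ bound degrades like $1/(\text{distance to endpoint})$; this degradation is genuine and is precisely what prevents reaching $p=1/2$. I would absorb it by writing $[0,l]=(0,l/2]\cup[l/2,l)$ (handling the second half by reparametrizing $\gamma$ from its other endpoint) and decomposing $(0,l/2]=\bigcup_{j\ge1}I_j$, $I_j=[\,l2^{-j-1},l2^{-j}\,]$. For $j\ge1$ a tent of width $\simeq|I_j|$ centred on $I_j$ has support inside $(0,l)$, so the local estimate gives $\int_{I_j}\rho^+\,dt\le C(n)\,2^j/l+C(n)\,l$; Jensen's inequality (concavity of $x\mapsto x^{1/2-\epsilon}$) then yields $\int_{I_j}(\rho^+)^{1/2-\epsilon}\,dt\le|I_j|^{1/2+\epsilon}\bigl(\int_{I_j}\rho^+\bigr)^{1/2-\epsilon}\le C(n,\epsilon)\,l^{2\epsilon}2^{-2\epsilon j}$ up to a term summable in $j$. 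Summing over $j$ — the geometric series converges exactly because $2\epsilon=1-2(\tfrac12-\epsilon)>0$ — together with the symmetric bound on $[l/2,l)$ and $l\le2$ gives $\int_0^l(\rho^+)^{1/2-\epsilon}\,dt\le C(n,\epsilon)$, which with the first step proves the lemma. (Equivalently one can run the Riccati/Laplacian-comparison inequality $m'+m^2/(n-1)\le-\rho$ for $m=\Delta\,d(\gamma(0),\cdot)$ along $\gamma$; its $1/t$ singularity at $t=0$ forces the same dyadic bookkeeping near the endpoints.)
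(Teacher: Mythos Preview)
Your argument is correct and reaches the same conclusion as the paper, but the route is genuinely different after the common starting point. Both proofs begin with the index-form inequality
\[
\int_0^l \rho\, f^2\,dt\ \le\ (n-1)\int_0^l (f')^2\,dt
\]
and the trivial bound on $\rho^-$. From there the paper inserts a \emph{single} test function, namely a power $h_l^{(1+\mu)/2}$ of the distance-to-nearest-endpoint function $h_l(t)\approx\min\{t,l-t\}$, obtaining a weighted $L^1$ bound $\int_0^l h_l^{1+\mu}\rho^+\,dt\le C$; it then recovers the unweighted $L^{1/2-\epsilon}$ bound by a single application of H\"older with exponents $\frac{2}{1-2\epsilon},\frac{2}{1+2\epsilon}$ (choosing $\mu$ so that the weight matches). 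Your approach instead localizes: you use many tent functions at dyadic scales $|I_j|\sim l2^{-j}$ near each endpoint to get $\int_{I_j}\rho^+\lesssim 2^j/l$, then pass to $L^{1/2-\epsilon}$ on each $I_j$ by Jensen and sum the geometric series. The two methods encode the same endpoint degradation (the $t^{-1}$ blow-up that forbids $p=1/2$), one continuously via a weight and H\"older, the other discretely via dyadic scales and Jensen. The paper's version is slightly slicker---one test function, one inequality---while yours is more modular and makes the role of each scale explicit; your parenthetical Riccati remark is also apt, since integrating $m'+m^2/(n-1)\le -\rho$ against the same dyadic cutoffs gives the identical bounds. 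One small cosmetic point: you first restrict to $f\in C_c^\infty(0,l)$ and then use piecewise-linear tents; either smooth them or (as you already noted) invoke the index form for piecewise-$C^1$ fields directly.
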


With lemma 1.1 in mind, we define a class of sufficiently long geodesics and show, in some sense to be clarified after the notation is fixed (proposition 1.3 below), that if the volume of $B(p_0,1)$ is small, there cannot be too many of these geodesics. Conversely (proposition 1.4) we show (with an added diameter assumption which will not play a significant role in our main goal) that if the volume is not too small then there is a good amount of these geodesics. We use these two relationships to deduce theorem 1.1 assuming lemma 1.1. The proof of lemma 1.1 is saved for after the proof of theorem 1.1.

\section{Notation and Preliminaries}

    Let $(M^n,g)$ be an $n$-dimensional complete Riemannian manifold and $TM$ its tangent bundle. Let $SM$ be the sub-bundle of unit length vectors. We use the notation $u=(p,v)\in TM$ (or $SM$), where $p\in M$ and $v\in T_pM$, to identify an arbitrary point in $TM$. Furthermore,  we take for granted the inclusion $ T_pM\subset TM$, so that we can for example take $U\in TM$, define $V:=U\cap T_pM$ and think of $V$ as a set of vectors in the tangent space at $p$. Ignoring this detail greatly simplifies notation. There is an obvious identification $T_{(p,v)}TM=T_pM \oplus T_pM$ and we let $X$ be the vector field on $TM$ such that $X(p,v)=(v,0)$. Let $\phi:\mathbb{R}\times TM\rightarrow TM$ the geodesic flow on $TM$ and $\phi|_t$ the flow restricted to time $t$, i.e.
$$\frac{d}{dt}\phi(t_0,p)=X(\phi(t_0,p))$$
and
$$\phi|_t(p)=\phi(t,p)$$ 
\\
It is a fact that, with respect to the Liouville measure, the geodesic flow preserves volume over $SM$. Therefore, for an open set $U\subset SM$ and $t\in \mathbb{R}$, we can express the integral of a function $f:SM\rightarrow \mathbb{R}$ over the subset $\phi|_t(U)$ as follows:
\begin{align*}
\int_{\phi|_t(U)}f(u)du &= \int_{U}(\phi|_t^* f)(u)du\\
&= \int_{\pi(U)}\int_{S_pM\cap U}f(\phi|_t(u))d_pu \ dp \numberthis \label{eqn1}
\end{align*}
where $du$ is the Liouville measure on $SM$, $\pi:SM\rightarrow M$ is the usual projection, $dp$ expresses the volume form in $M$, and $d_pu$ is the Lebesgue measure on $S_pM$, i.e. it locally satisfies $d_pu\times dp=du$. 
Let 
$$seg(p):=\{v\in T_pM| \ exp_p(tv) :[0,1] \rightarrow M \ is \ minimizing\}$$
be the segment domain and 
$$seg_0(p):=\{tv| \ 0<t<1, v\in seg(p)\}$$
 Define $$W_{t,p}:=\{v \in S_pM| \ tv\in seg_0(p) \}$$  This is the collection of unit vectors based at $p$ whose corresponding geodesics, when parameterized by unit speed, are minimizing on the interval $[0,t]$.

To avoid complicating notation, we fix an arbitrary $p_0\in M$ now and define  
\begin{align*}
    W_t:=\underset{p\in B(p_0,3)}{\bigcup} W_{t,p} \numberthis  \label{def1}
\end{align*}
 Where, as mentioned above, we take for granted the inclusion $W_{t,p}\subset SM$. $W_t$ can of course be defined for any $p_0\in M$ this way, so when this notation appears below in the proofs, we understand that it has been defined for the appropriate, arbitrarily chosen $p_0$ occurring there. 
Observe that 

\begin{align*}
\int_{\phi|_t(W_t)}f(u)du &= \int_{W_t}(\phi|_t^*f)(u)du\\
&= \int_{B(p_0,3)}\int_{W_{t,p}}f(\gamma_u(t)) \ d_pu \ dp \numberthis \label{eqn2}
\end{align*}

for $u=(p,v)\in SM$, let $$R(u):=|Ric(v,v)|$$ Note that $R$ is a function defined on the unit tangent bundle $SM$, and does not denote, as $R$ often does, the scalar curvature on $M$. Let $Ric_p$ denote the Ricci tensor at $p\in M$ and $|Ric|(p)$ denote its magnitude, i.e.
$$|Ric|(p):=<Ric_p,Ric_p>^{\frac{1}{2}}$$
 For $u\in SM$, let $\beta(u)$ denote the distance to the cut locus in the direction of $u$, i.e.
 $$\beta(u):=\sup\{t>0| tv\in seg(p)\}$$ For $k>0$ let $\beta_k(u):=min\{\beta(u),k \}$.

For every $p\in M$, let  $v_p\in S_pM$ be an eigenvector corresponding to the largest (in absolute value) eigenvalue of $Ric_p$.
Keep in mind the relation
$$|Ric_p(v_p,v_p)|\geq\frac{1}{n}|Ric|(p)$$
Now let $\mu>0$ be a small constant whose exact value will be determined later.
let $$S_{p,\mu}:=\{v\in S_p M \ | |g(v,v_p)|>\mu\}$$
which we think of this set as those unit vectors which point "roughly" along the same line as $v_p$. For $v\in S_{p,\mu}$,
$$|Ric_p(v,v)|\geq \frac{\mu^2}{n}|Ric|(p)$$
Observe that for all $p\in M$
\begin{equation}
d_pu(S_pM\backslash S_{p,\mu})\leq C_1\mu \numberthis \label{eq7}
\end{equation}
for some constant $C_1$ depending only on the dimension. Finally, let $S_{\mu}$ be the union of $S_{p,\mu}$ over all $p\in M$. \\
With the notation clarified, we now state the relationships between sufficiently long minimizing geodesics and volume mentioned in the introduction. We save their proofs for after the proof of theorem 1.1. It may be useful throughout to keep in mind that, by the Bishop Gromov volume comparison [3 ch. 9, lemma 36], for any $r_1, r_2>0$, a term $C_1Vol(B(p_0, r_1)$ appearing in an inequality can always be replaced by a term $C_2Vol(B(p_0,r_2))$ for suitable $C_2$ depending on the ratio $r_1/r_2$ (In particular it is more or less inconsequential that the radii in propositions 1.3 and 1.4 are different, so long as their ratio is bounded away from zero and infinity). 

\begin{lemma}
let $(M^n,g)$ be a Riemannian manifold. Assume $p_0\in M$ is given and the set $W_1$ is defined as in \eqref{def1} in the preliminaries, then
$$Vol(W_1)\leq C_1(n)Vol(B(p_0,5))^2$$
for some $C_1(n)$ depending only on the dimension.

\end{lemma}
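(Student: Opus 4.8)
The plan is to bound $\mathrm{Vol}(W_1)$ by relating it to volumes of balls via the geodesic flow, exploiting that the flow preserves Liouville measure. First I would observe that $W_1 \subset SM$ fibers over $B(p_0,3)$, so $\mathrm{Vol}(W_1) = \int_{B(p_0,3)} d_pu(W_{1,p})\, dp$, and since each fiber $W_{1,p}$ lies in the unit sphere $S_pM$ of bounded measure $\omega_{n-1}$, the naive bound is $\mathrm{Vol}(W_1) \leq \omega_{n-1}\mathrm{Vol}(B(p_0,3))$ — but this is only \emph{linear} in volume, not quadratic, so it is too weak and does not capture collapsing. The key idea must be that the endpoints of the geodesics also sweep out a region of small volume: if $v \in W_{1,p}$, then $\phi|_1(p,v) = (\gamma_v(1), \gamma_v'(1))$ has footpoint $\gamma_v(1) \in B(p_0,4) \subset B(p_0,5)$ (using $p \in B(p_0,3)$ and the geodesic has length $1$).

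The main step is therefore a change-of-variables / Fubini argument run in \emph{both} directions. On one hand, $\mathrm{Vol}(\phi|_1(W_1)) = \mathrm{Vol}(W_1)$ by measure-preservation. On the other hand, I would try to express $\mathrm{Vol}(W_1)$ as an integral over pairs of endpoints $(p, q) = (\gamma(0), \gamma(1))$ with $p \in B(p_0,3)$, $q \in B(p_0,4)$. For a minimizing geodesic of unit length, the pair $(p,q)$ determines $v = \gamma'(0)$ uniquely (since $q$ is not in the cut locus of $p$ when the geodesic is minimizing — actually one should be slightly careful at the cut locus boundary, but that is measure zero). So the map $W_{1,p} \ni v \mapsto \gamma_v(1) \in M$ is essentially injective onto its image, which sits inside $B(p_0,4)$. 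The substitution introduces the Jacobian of $\exp_p$ at distance $1$; this is exactly where a Ricci lower bound helps — by the Bishop comparison / Jacobi field estimate, on the segment domain this Jacobian is bounded \emph{below} (away from the cut locus the volume density $\mathcal{A}(t,v)$ satisfies $\mathcal{A}(1,v) \geq c(n) > 0$ under $Ric \geq -1$ for $t \leq 1$, since the density can only have decreased by a controlled factor from its initial Euclidean value). Thus $d_pu(W_{1,p}) = \int_{\{v : \gamma_v(1) \in M\}} dv \leq \frac{1}{c(n)}\int \mathcal{A}(1,v)\,dv = \frac{1}{c(n)} \mathrm{Vol}_{n-1}(\text{geodesic sphere image}) \leq \frac{1}{c(n)} \cdot (\text{local } (n{-}1)\text{-volume near } q)$. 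Integrating $dp$ over $B(p_0,3)$ and swapping order of integration, the $p$-integral of the local $(n-1)$-content near each fixed $q$ is controlled by $\mathrm{Vol}(B(p_0,5))$ (via a further Bishop–Gromov argument, using that the geodesics from $q$ backwards of length $1$ stay in $B(p_0,5)$), yielding the product $\mathrm{Vol}(B(p_0,3)) \cdot \mathrm{Vol}(B(p_0,4)) \leq C(n)\mathrm{Vol}(B(p_0,5))^2$ after the Bishop–Gromov radius-adjustment remarked in the preliminaries.

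The hardest part, I expect, is making the double change of variables rigorous: one must carefully track the two Jacobians (of $\exp_p$ at the source end and, after the $p$-integration, of the "reverse exponential" at the target end $q$), ensure the almost-everywhere injectivity on the segment domain, and handle the cut-locus boundary (measure zero, but the density estimate degenerates there). A clean way to organize this is to use the coarea/Fubini formula (eqn. \eqref{eqn2}) with $f \equiv 1$ on $\phi|_1(W_1)$, then perform a \emph{second} disintegration of the resulting integral over $M \times M$ — first fixing $q = \gamma(1)$ and integrating over admissible $p$, for which we again invoke that $v \mapsto p = \gamma_v(0)$ run backwards from $q$ is, up to the density factor, a change of variables onto a subset of $B(p_0,5)$, picking up a second factor of $\mathrm{Vol}(B(p_0,5))$ (up to the Bishop–Gromov constant). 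The lower Ricci bound enters exactly twice, once for each Jacobian lower bound, and the constant $C_1(n)$ absorbs $\omega_{n-1}^2/c(n)^2$ together with the Bishop–Gromov ratio constants.
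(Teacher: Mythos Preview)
Your argument hinges on the claim that under $Ric\geq -1$ the exponential-map density satisfies $\mathcal{A}(1,v)\geq c(n)>0$ along minimizing geodesics, and this is false. Bishop comparison under a \emph{lower} Ricci bound says that $\mathcal{A}(t,v)/\sinh^{n-1}(t)$ is non-increasing in $t$; since this ratio tends to $1$ as $t\to 0$, you get only the \emph{upper} bound $\mathcal{A}(t,v)\leq \sinh^{n-1}(t)$. No lower bound follows: the density can approach zero near conjugate points. For a concrete example, take $S^n(r)$ with $r=\tfrac{1}{\pi}+\varepsilon$. The Ricci curvature is positive (hence $\geq -1$), minimizing geodesics of length $>1$ exist so $W_{1,p}=S_pM$, yet $\mathcal{A}(1,v)=\bigl(r\sin(1/r)\bigr)^{n-1}\to 0$ as $\varepsilon\to 0$. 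With $\mathcal{A}(1,v)$ uncontrolled from below, your change-of-variables step $d_pu(W_{1,p})\leq c(n)^{-1}\int_{W_{1,p}}\mathcal{A}(1,v)\,dv$ collapses, and the rest of the double-disintegration plan has nothing to stand on. Note too that the lemma as stated carries \emph{no} curvature hypothesis, so any argument that genuinely requires $Ric\geq -1$ would prove something strictly weaker.

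The paper circumvents exactly this obstacle by invoking Croke's lemma, which provides the missing lower bound in an averaged form: for each $u\in W_1$ one has $\int_0^1\!\int_0^{1-t}F(\phi|_t(u),s)\,ds\,dt\geq c(n)$, where $F$ is the density in exponential coordinates based at the moving point $\phi|_t(u)$. Integrating this over $W_1$ gives $c(n)\,\mathrm{Vol}(W_1)$ on one side. On the other side, after pushing forward by the (Liouville-measure-preserving) flow $\phi|_t$ and disintegrating over footpoints $q\in B(p_0,4)$, the inner integral $\int_{S_qM}\int_0^{\beta_1(u)}F(u,s)\,ds\,d_qu$ is at most $\mathrm{Vol}(B(q,1))\leq \mathrm{Vol}(B(p_0,5))$; the outer $dq$-integral over $B(p_0,4)$ contributes the second factor of $\mathrm{Vol}(B(p_0,5))$. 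So the quadratic bound comes from a Santal\'o-type double integration, but the pointwise density lower bound you wanted is replaced by Croke's integrated lower bound---and that replacement is essential, not cosmetic.
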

\begin{lemma} Let $(M^n,g)$ be a Riemannian manifold. Assume $p_0\in M$ is given and the set $W_1$ is defined as in \eqref{def1} in the preliminaries and
assume the diameter $diam(M,g)$ of $M$ satisfies $diam(M,g)\geq6$. Then there exist $\mu(V), \delta(V)>0$, depending on $V:=vol(B(p_0,2))$ and $n$, such that for all $p\in B(p_0,2)$, 
$$d_pu(\phi|_t(W_t) \cap S_{p,\mu} )>\delta(V)$$
for all $1<t<2$. Furthermore, 
$$\mu(V)\geq c_2(n)V$$
$$\delta(V)\geq c_3(n)V$$
for $c_1(n), c_2(n)$ depending only on the dimension.
\\
(we will later ignore the dependence on $V$ and write $\delta=\delta(V)$ etc.)
\end{lemma}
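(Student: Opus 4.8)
The plan is to reduce the statement to a lower bound on the $d_pu$–measure of ``long'' directions at $p$, and then obtain that bound from the diameter hypothesis via Bishop–Gromov comparison (I assume throughout, as in Theorem~1.1, that $Ric\geq-1$ on $B(p_0,5)$; this seems to be needed but is not written in the statement). Fix $p\in B(p_0,2)$ and $t\in(1,2)$. Unwinding the definitions of $\phi|_t$ and $W_{t,q}$, and using the symmetry of the cut‑locus relation (``$q$ is a cut point of $p$'' iff ``$p$ is a cut point of $q$''), one checks that
$$\phi|_t(W_t)\cap S_pM=\{\,u\in S_pM:\ \beta(p,-u)>t\ \text{and}\ \exp_p(-tu)\in B(p_0,3)\,\};$$
indeed $u$ lies on the left iff $q:=\exp_p(-tu)\in B(p_0,3)$, $d(p,q)=t$, and $q\notin\mathrm{Cut}(p)$, and the last two conditions together are exactly $\beta(p,-u)>t$. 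Applying the isometry $u\mapsto -u$ of $S_pM$ (which preserves $d_pu$ and each $S_{p,\mu}$), it is enough to bound $d_pu\bigl(\{w:\beta(p,w)>t,\ \exp_p(tw)\in B(p_0,3)\}\cap S_{p,\mu}\bigr)$ from below. But if $\beta(p,w)>t$ then $d(\exp_p(tw),p_0)\le t+d(p,p_0)<4$, so the endpoint lies in $B(p_0,4)$ automatically; since by the remark following \eqref{def1} replacing the radius $3$ in the definition of $W_t$ by $4$ is inconsequential, the endpoint condition may be dropped. Using also $\{\beta>2\}\subseteq\{\beta>t\}$ for every $t\in(1,2)$ together with \eqref{eq7}, the whole lemma follows once we show
$$d_pu(\{w\in S_pM:\ \beta(p,w)>2\})\ \geq\ c(n)\,\mathrm{vol}(B(p_0,2));$$
one then sets $\mu(V):=\tfrac{c(n)}{2C_1}V$ and $\delta(V):=\tfrac{c(n)}{4}V$, which satisfy the stated lower bounds in $V$.

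For the displayed estimate, use $\mathrm{diam}(M,g)\ge 6$: if $d(x,y)\ge 6$ then $\max(d(p,x),d(p,y))\ge 3$, so there is $q_1$ with $d(p,q_1)\ge 3$; let $\gamma$ be a unit‑speed minimizing geodesic from $p$ to $q_1$ and put $x_0:=\gamma(5/2)$, so $d(p,x_0)=5/2$ and $B(x_0,2/5)\subseteq B(p_0,5)$. Let $\Omega\subseteq S_pM$ be the set of initial directions at $p$ of minimizing geodesics from $p$ to points of $B(x_0,2/5)$. If $w\in\Omega$ realizes a minimizing geodesic from $p$ to $q\in B(x_0,2/5)$, that geodesic is minimizing up to length $d(p,q)\ge d(p,x_0)-2/5>2$, hence $\beta(p,w)>2$; thus $\Omega\subseteq\{w:\beta(p,w)>2\}$, and it suffices to bound $d_pu(\Omega)$ below. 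First, $\mathrm{vol}(B(x_0,2/5))\ge c(n)\,\mathrm{vol}(B(p_0,2))$ by Bishop–Gromov comparison, chaining balls of radius $2/5$ along the geodesic from $x_0$ to $p_0$, which together with these balls stays inside $B(p_0,5)$ (this is the mechanism behind the remark following \eqref{def1}). Second, every $q\in B(x_0,2/5)$ equals $\exp_p(rw)$ for some $w\in\Omega$ and $r=d(p,q)\in(2,3)$ with $r\le\beta(p,w)$, so $B(x_0,2/5)\subseteq\exp_p\bigl(\{rw:w\in\Omega,\ 2<r<3,\ r\le\beta(p,w)\}\bigr)$; along these radial segments no conjugate point of $p$ is met, so the density $\mathcal A(w,r)$ of $\mathrm{vol}$ in geodesic polar coordinates satisfies $\mathcal A(w,r)\le c(n)r^{n-1}$ for $r\le 3$ by Bishop's estimate (the geodesics remain in $B(p_0,5)$), whence the area formula gives $\mathrm{vol}(B(x_0,2/5))\le\int_\Omega\int_2^3\mathcal A(w,r)\,dr\,d_pu(w)\le c(n)\,d_pu(\Omega)$. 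Combining the two inequalities yields $d_pu(\Omega)\ge c(n)\,\mathrm{vol}(B(p_0,2))$, as required.

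The main obstacle is precisely this lower bound $d_pu(\{\beta(p,\cdot)>2\})\ge c(n)V$. The obvious route---bounding the annular volume $\mathrm{vol}(B(p,3)\setminus B(p,2))$ below by $c(n)V$ and above by $c(n)\,d_pu(\{\beta(p,\cdot)>2\})$---only yields the upper relation: Bishop–Gromov bounds $\mathrm{vol}(B(p,2))/\mathrm{vol}(B(p,3))$ only from below, and this ratio can be arbitrarily close to $1$, so the annulus need not be a definite fraction of $V$. What makes the argument go through is the diameter hypothesis: it supplies a ball $B(x_0,2/5)$ of definite size, sitting at distance $\approx 5/2$ from $p$, every point of which is joined to $p$ by a minimizing geodesic of length $>2$; such a ball's volume is comparable to $V$ by Bishop–Gromov and the solid angle it subtends at $p$ is comparable to its volume by the area formula, and those are exactly the two facts needed. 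The remaining work is bookkeeping: keeping straight the strict versus non‑strict inequalities in $\beta$, $W_{t,q}$ and the cut locus, using the symmetry of the cut locus in the reduction step, and invoking the remark following \eqref{def1} to discard the endpoint constraint.
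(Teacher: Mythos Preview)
Your proof is correct and follows essentially the same route as the paper's: reduce via the antipodal map and \eqref{eq7} to a lower bound on $d_pu(W_{2,p})$, then use the diameter hypothesis to place a ball (yours is $B(x_0,2/5)$ at distance $5/2$, the paper's is $B(q,1)$ at distance $3$) whose volume is $\ge c(n)V$ by Bishop--Gromov and is bounded above by $c(n)\,d_pu(W_{2,p})$ via the polar density estimate. Your handling of the reduction (cut-locus symmetry, the endpoint constraint, the missing Ricci hypothesis) is more explicit than the paper's, but the argument is the same.
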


\section{Proof of theorem 1.1 assuming lemmas 1.1, 2.1, 2.2}

\begin{proof}
we first argue why we can assume $diam(M,g)\geq6$. Assume $R:=\frac{6}{diam(M,g)}>1$. Define the rescaled manifold $(M,g_R)$, where $g_R(\cdot,\cdot):=R^2g(\cdot,\cdot)$. Let $Ric_R$, $B_R(p_0,r)$ and $d_Rp$ denote the Ricci curvature, $r$-ball and volume form respectively, each with respect to $g_R$. Note that, because $R>1$, we preserve the condition $Ric\geq-1$ for the rescaled manifold.  

\begin{align*}
   ||Ric_R||^{\frac{1}{2}-\epsilon}_{L^{1/2-\epsilon}(B_R(p_0,R))}&=\int_{B_R(p_0,R)}(|Ric_R|(p))^{\frac{1}{2}-\epsilon}d_Rp\\
    &=\int_{B(p_0,1)}(R^{-2})^{\frac{1}{2}-\epsilon}(|Ric|(p))^{\frac{1}{2}-\epsilon}R^ndp\\
    &=R^{n-1+2\epsilon}||Ric||^{\frac{1}{2}-\epsilon}_{L^{1/2-\epsilon}(B(p_0,1))}
\end{align*}
so that
$$||Ric||^{\frac{1}{2}-\epsilon}_{L^{1/2-\epsilon}(B(p_0,1))}\leq R^{-n+1-2\epsilon} ||Ric_R||^{\frac{1}{2}-\epsilon}_{L^{1/2-\epsilon}(B_R(p_0,R))}$$
Now, we can cover $B_R(p_0,6)$ with $K$-many balls $B_R(p_i,1)$, where $K$ only depends on the dimension and a lower bound on the Ricci curvature, and so assuming we have proven the theorem for the case $diam(M,g)\geq6$, and using the Bishop-Gromov volume comparison,  it follows that 
\begin{align*}
||Ric||^{\frac{1}{2}-\epsilon}_{L^{1/2-\epsilon}(B(p_0,1))}&\leq  R^{-n+1-2\epsilon} \cdot \sum||Ric_R||^{\frac{1}{2}-\epsilon}_{L^{1/2-\epsilon}(B_R(p_i,1))}\\
&\leq R^{-n+1-2\epsilon} \cdot KC(n)\cdot Vol_R(B_R(p_i,1))^{2\epsilon}\\
&\leq  R^{-n+1-2\epsilon} \cdot C'(n)\cdot Vol_R(B_R(p_0,1))^{2\epsilon}\\
&\leq R^{-n+1-2\epsilon} \cdot C''(n)\cdot Vol_R(B_R(p_0,R))^{2\epsilon}\\
&= R^{-n+1-2\epsilon} \cdot C''(n)\cdot R^{2\epsilon n}Vol(B(p_0,1))^{2\epsilon}\\
&\leq  C''(n)Vol(B(p_0,1))^{2\epsilon}
\end{align*}
where the last line follows because $R>1$, $n\geq2$ and $\epsilon<1/2$.
Now, with the assumption that the diameter of $M$ satisfies $diam(M,g)\geq6$, we assume lemmas 1.1, 2.1, and 2.2 hold. 
Firstly, for $1<t<2$ we have 

 \begin{align*}
 \int_{B(p_0,3)}\int_{W_{t,p}}R(\phi|_t(u))^{\frac{1}{2}-\epsilon}d_pu \ dp &=\int_{W_t}R(\phi|_t(u))^{\frac{1}{2}-\epsilon}du\\
&= \int_{\phi|_t(W_t)}R(u)^{\frac{1}{2}-\epsilon}du \\
&\geq \int_{\phi|_t(W_t) \bigcap S_{\mu}}R(u)^{\frac{1}{2}-\epsilon}du\\
&=\int_{M}\int_{\phi|_t(W_t) \cap S_{p,\mu}}R(p,v)^{\frac{1}{2}-\epsilon}d_pu \ dp \\
&\geq \int_{B(p_0,1)}\int_{\phi|_t(W_t) \cap S_{p,\mu}}R(p,v)^{\frac{1}{2}-\epsilon}d_pu \ dp \\
&\geq \int_{B(p_0,1)}\int_{\phi|_t(W_t) \cap S_{p,\mu}}(\frac{\mu^2}{n}|Ric|(p))^{\frac{1}{2}-\epsilon}d_pu \ dp \\
&\geq \int_{B(p_0,1)}\delta(\frac{\mu^2}{n})^{\frac{1}{2}-\epsilon}(|Ric|(p))^{\frac{1}{2}-\epsilon}dp \\
 &\geq  \delta(\frac{\mu^2}{n})^{\frac{1}{2}-\epsilon}||Ric||^{\frac{1}{2}-\epsilon}_{L^{1/2-\epsilon}(B(p_0,1))}\\
 &\geq c Vol(B(p_0,1))^{2-2\epsilon} ||Ric||^{\frac{1}{2}-\epsilon}_{L^{1/2-\epsilon}(B(p_0,1))} \numberthis
\end{align*}
 where we have used $\mu=\mu(V)$ and $\delta=\delta(V)$ as in lemma 2.2 to determine $c>0$.

Because this holds for all $1<t<2$, we similarly obtain
\begin{align*}
 \int_1^2\int_{B(p_0,3)}\int_{W_{t,p}}R(\phi|_t(u))d_pu \ dt \ dp &\geq \int_1^2cVol(B(p_0,1))^{2-2\epsilon}||Ric||^{\frac{1}{2}-\epsilon}_{L^{1/2-\epsilon}(B(p_0,1))} dt \numberthis \label{eqn5}\\
&\geq c Vol(B(p_0,1))^{2-2\epsilon}||Ric||^{\frac{1}{2}-\epsilon}_{L^{1/2-\epsilon}(B(p_0,1))}
\end{align*}

We obtain an upper bound for the term on the left as follows. By Fubini's theorem, lemmas 1.1 and 2.1 and the assumption that $Ric\geq -1$ on $B(p_0,5)$,
\begin{align*}
\int_1^2\int _{B(p_0,3)}\int_{W_{t,p}} R(\phi|_t(u))^{\frac{1}{2}-\epsilon}d_pu \ dt \ dp
&=\int _{B(p_0,3)}\int_{W_{1,p}}\int_1^{\beta_2(u)}R(\phi|_t(u))^{\frac{1}{2}-\epsilon}dt \ d_pu \ dp\\
&\leq \int _{B(p_0,3)}\int_{W_{1,p}} C(\epsilon) \ \ d_pu \ dp \\
&\leq C(\epsilon)Vol(W_1) \\
&\leq C(\epsilon)Vol(B(p_0,5))^2\\
&\leq C'(\epsilon)Vol(B(p_0,1))^2
\end{align*}

It finally follows that
\begin{align*}
 c Vol(B(p_0,1))^{2-2\epsilon}||Ric||^{\frac{1}{2}-\epsilon}_{L^{1/2-\epsilon}(B(p_0,1))}
&\leq C'(\epsilon)Vol(B(p_0,1))^2
\end{align*}
which is theorem 1.1.
\end{proof}

\section{Proofs of lemmas 1.1, 2.1, 2.2}

\begin{proof}[Proof of lemma 1.1]
Along a geodesic $\gamma$ parametrized by arc length, choose $E_1(t),...,E_{n-1}(t)$ to be orthonormal parallel vector fields along $\gamma$ that are perpendicular to $\gamma'$. If $h$ is a nonnegative continuously differentiable function, by the second variation formula, 
$$0\leq \int_0^l (h'(t))^2dt-\int_0^l(h^2 sec(\gamma'(t),E_i(t)))dt$$
Summing over $i=1,...,n-1$ gives 
\begin{equation}
\int_0^l h(t)^2 Ric(\gamma'(t),\gamma'(t)) dt\leq (n-1)\int_0^l (h'(t))^2dt \label{2var}
\end{equation}
Let $Ric_+(\cdot,\cdot):=max\{Ric(\cdot,\cdot),0\}$ and $Ric_-(\cdot,\cdot):=-min\{Ric(\cdot,\cdot),0\}$. By assumption $Ric_-\leq1$. Therefore, we only need to show 

$$\int_0^lRic_+(\gamma'(t),\gamma'(t))^{\frac{1}{2}-\epsilon}dt\leq C(\epsilon)$$
to complete the proof.
It follows from \eqref{2var} that
\begin{align*}
   0&\leq \int_0^lh(t)^2 Ric_+(\gamma'(t),\gamma'(t)) dt \\
   &\leq (n-1)\int_0^l (h'(t))^2dt+\int_0^lRic_-(\gamma'(t),\gamma'(t)) dt\\
   &\leq (n-1)\int_0^l (h'(t))^2dt+2 \numberthis \label{bound}\\
\end{align*}
 since $l\leq2$. Define $h_l(t):=t$ on $[0,l/2]$, and $h_l(t):=(l/2-t)$ on $[l/2,l]$, modified in $[l/4,3l/4]$ so that the resulting function is smooth on $[0,l]$ and preserving both that $|h_l|>l/4$ and $|h_l'|\leq1$ hold within $[l/4,3l/4]$. For fixed $\mu>0$, using \eqref{bound} and the construction of $h_l$

\begin{align*}
\int_0^l h_l(t)^{1+\mu}Ric_+(\gamma'(t),\gamma'(t))dt &=
\int_0^l \Big(h_l(t)^{\frac{1+\mu}{2}}\Big)^2 Ric_+(\gamma'(t),\gamma'(t)) dt\\
&\leq (n-1)\int_0^l \Big((\frac{1+\mu}{2})h_l^{-\frac{1+\mu}{2}}h_l'(t)\Big)^2dt +2\\
&= (n-1)\int_0^{l/4} \Big((\frac{1+\mu}{2})h_l^{-\frac{1+\mu}{2}}h_l'(t)\Big)^2dt\\
&+(n-1)\int_{l/4}^{3l/4} \Big((\frac{1+\mu}{2})h_l^{-\frac{1+\mu}{2}}h_l'(t)\Big)^2dt+2\\
&+(n-1)\int_{3l/4}^l \Big((\frac{1+\mu}{2})h_l^{-\frac{1+\mu}{2}}h_l'(t)\Big)^2dt\\
&\leq 2(n-1)\frac{(1+\mu)^2}{4}\int_0^{l/4}t^{-1+\mu}dt+C\\
&< C(\epsilon)
\end{align*}

Let $0<\epsilon<1/2$ be given. Using Hölder's inequality with exponents $p=\frac{2}{1-2\epsilon}$ and $q=\frac{2}{1+2\epsilon}$ gives
\begin{align*}
    \int_0^l Ric_+(\gamma'(t),\gamma'(t))^{\frac{1}{2}-\epsilon}&= \int_0^l \frac{h_l(t)^{\frac{1-\epsilon}{2}}}{h_l(t)^{\frac{1-\epsilon}{2}}}Ric_+(\gamma'(t),\gamma'(t))^{\frac{1}{2}-\epsilon}dt \\
    &\leq \Bigg(\int_0^l h_l(t)^{\frac{1-\epsilon}{1-2\epsilon}}Ric_+(\gamma'(t),\gamma'(t))dt\Bigg)^{1/p}\Bigg( \int_0^lh_l(t)^{\frac{\epsilon-1}{1+2\epsilon}}dt\Bigg)^{1/q} \\
    &=\Bigg(\int_0^l h_l(t)^{1+\mu}Ric_+(\gamma'(t),\gamma'(t))dt\Bigg)^{1/p}\Bigg( \int_0^lh_l(t)^{\frac{\epsilon-1}{1+2\epsilon}}dt\Bigg)^{1/q}\\
    &\leq C(\epsilon)\Bigg( \int_0^lh_l(t)^{\frac{\epsilon-1}{1+2\epsilon}}dt\Bigg)^{1/q}\\
    &\leq C(\epsilon)\Bigg( \int_0^lt^{\frac{\epsilon-1}{1+2\epsilon}}dt\Bigg)^{1/q}\\
    &\leq C'(\epsilon)
    \end{align*}
    Where we have chosen $\mu$ to satisfy $1+\mu=\frac{1-\epsilon}{1-2\epsilon}$.

\end{proof}

Fix $u=(p,v)\in SM$. $u$ determines a minimizing geodesic $\gamma_v:[0,l]\rightarrow M$ parameterized by unit speed for some $l>0$. In exponential coordinates based at $p=\gamma_v(0)$, the volume form can be expressed as 
$$f(t,v)dt\wedge dv$$
where $t$ represents the radial coordinate and $v\in SM$ determines a direction in $S^{n-1}$. Restricting to our fixed $v$, this determines a function $f_v(t)$ for $0<t<l$, which we think of as the magnitude of the volume form along $\gamma$ starting from $p$. By volume comparison and our curvature assumption, this function cannot be larger than the corresponding function for a manifold of constant curvature equal to -1. It can, however, be much smaller.
\\
\ If we consider instead a new starting point along $\gamma_v$, say $\gamma_v(s)$ for some $s<l$, we can let $\bar{v}:=\gamma_v'(s)$ and consider the magnitude of the volume form in exponential coordinates along $\gamma_{\bar{v}}$ based at $\gamma_{\bar{v}}(0)=\gamma_v(s)$. This defines a new function $f_{\bar{v}}(t)$ for $0<t<l-s$ in the same fashion as above. In this way, we can examine the magnitude of the volume form along any minimizing geodesic starting from any base point along that geodesic. Accordingly, we define $F:SM\times \mathbb{R}_+\rightarrow \mathbb{R}$ so that it satisfies, for a given $u=(p,v)\in SM$ and $t\in (0,\beta(u))$
$$F(u,t):=f_v(t)$$

We will use the following, lemma 9 from \cite{croke}, which says that on average, the function determined in this way cannot be too small. 

\begin{lemma}
Let $(M^n,g)$ be a complete Riemannian manifold and $u\in SM$. Then for every $l\leq \beta(u)$ (the distance to the cut locus in the direction of $u$):
$$\int_{0}^{l}\int_{0}^{l-t}F(\phi|_t(u),s)ds\ dt\geq C(n)\frac{l^{n+1}}{\pi^{n+1}}$$
Where $F(u,s)$ restricted to $S_pM\times\mathbb{R}_+$ is the magnitude of the volume form in exponential coordinates on $M$, i.e. it satisfies
\begin{align*} 
\int_{S_pM}\int_0^{\beta(u)}F(u,s)ds \ dv=Vol(M) \numberthis \label{def3}
\end{align*}

\end{lemma}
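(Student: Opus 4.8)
The plan is to treat this as a purely integral-geometric statement --- no curvature is assumed, so Bishop--Gromov comparison is unavailable --- and to recognize it as an instance of the Berger--Kazdan inequality. Write $\gamma=\gamma_u$, and recall that $F(\phi|_a(u),s)=\det A_a(s)$, where $A_a(\cdot)$ is the $(n-1)\times(n-1)$ matrix solution of the Jacobi equation along $\gamma$ based at $\gamma(a)$ with $A_a(0)=0$, $A_a'(0)=I$ ($A_a$ being a function of the arclength measured from $\gamma(a)$). First I would substitute $b=t+s$ to rewrite the double integral as
$$\iint_{0\le a\le b\le l}F(\phi|_a(u),\,b-a)\,da\,db .$$
Since $l\le\beta(u)$, the segment $\gamma|_{[0,l]}$ is minimizing, hence so is every sub-arc $\gamma|_{[a,b]}$; in particular there are no interior conjugate points and $F(\phi|_a(u),b-a)=\det A_a(b-a)>0$ for $0\le a<b<l$, so the integrand is legitimate.

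Next I would extract a differential-inequality structure on the triangle $\Delta=\{0\le a\le b\le l\}$. Set $V(a,b):=\big(\det A_a(b-a)\big)^{1/(n-1)}$; then $V(a,b)=(b-a)+O\!\big((b-a)^3\big)$ near the diagonal and $V>0$ off it. The trace form of the Riccati equation, together with $\operatorname{tr}(S^2)\ge\frac1{n-1}(\operatorname{tr}S)^2$, shows that for fixed $a$ the function $b\mapsto V(a,b)$ satisfies $\partial_b^2V+Q(b)V\le0$, where $Q(\xi):=Ric(\gamma'(\xi),\gamma'(\xi))/(n-1)$. The crucial extra input is reciprocity: because the Wronskian $A_a'{}^{T}B-A_a^{T}B'$ of two matrix Jacobi solutions is constant, $\det A_a(b-a)$ equals $\det\widetilde A_b(b-a)$, where $\widetilde A_b$ is the matrix Jacobi field based at the endpoint $\gamma(b)$ along the reversed geodesic; running the same Riccati argument in that parametrization gives, for fixed $b$, $\partial_a^2V+Q(a)V\le0$ --- the same potential $Q$, now read off at the other endpoint. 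Thus $V$ is ``doubly Sturmian'' for a common potential.

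Finally I would invoke the Berger--Kazdan inequality: for such a $V$ on $\Delta$ --- vanishing to first order on the diagonal, positive off it, satisfying $\partial_b^2V+Q(b)V\le0$ and $\partial_a^2V+Q(a)V\le0$, with $l$ no larger than the conjugate distance (which holds here) --- one gets $\iint_\Delta V^{n-1}\ge c(n)\,l^{n+1}/\pi^{n+1}$, with equality exactly for $V(a,b)=\frac l\pi\sin\!\big(\tfrac{\pi(b-a)}{l}\big)$, the round sphere of radius $l/\pi$; evaluating that model integral gives $c(n)=\frac\pi2\int_0^\pi\sin^{n-1}\theta\,d\theta$, which is the assertion. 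The whole difficulty is concentrated in this last step: Berger--Kazdan is a genuinely delicate analytic inequality (one proof runs through a Hölder-inequality comparison of $V$ with the ``sine'' profile of the common potential), which is why it is cited rather than reproved; everything before it is bookkeeping plus the Wronskian identity. It is worth noting that the single ``fixed $a$'' inequality alone does not suffice --- a curvature spike near $\gamma(a)$ can force $\int_0^{l-a}V(a,b)^{n-1}\,db$ to be arbitrarily small --- so the two-sided structure supplied by reciprocity is essential, not cosmetic.
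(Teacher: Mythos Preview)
Your identification is correct, but note that the paper does not prove this lemma at all: it is simply quoted as ``lemma 9 from \cite{croke}'' and used as a black box. Croke in turn does not prove it either --- he attributes it to Berger and Kazdan --- so your recognition of the statement as the Berger--Kazdan inequality is exactly right, and your outline of how that inequality is set up (rewriting the integral over the triangle $\{0\le a\le b\le l\}$, introducing $V=(\det A_a)^{1/(n-1)}$, extracting the two Sturm--Liouville inequalities via Riccati plus the Wronskian reciprocity, and then invoking the analytic Berger--Kazdan lemma) is the standard route and is accurate, including your computation of the sharp constant $c(n)=\tfrac{\pi}{2}\int_0^\pi\sin^{n-1}\theta\,d\theta$. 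In short, you have supplied more than the paper does: the paper treats this as a citation, while you sketch the actual argument behind the cited result. Your closing remark that the one-sided inequality is insufficient and that the reciprocity is essential is also a correct and worthwhile observation.
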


\begin{proof}[Proof of lemma 2.1] 

The following inequality follows directly from the lemma, 
\begin{align*}
  &\int_{0}^{1}\int_{B(p_0,3)}\int_{W_{1,p}} \int_{0}^{1-t} F(\phi|_t(u),s) ds \ d_pu  \ dp \ dt \\
  &= \int_{B(p_0,3)}\int_{W_{1,p}} \int_{0}^{1}\int_{0}^{1-t} F(\phi|_t(u),s) ds \ dt\ d_pu  \ dp \\
  &\geq \int_{B(p_0,3)}\int_{W_{1,p}}c  \ d_pu \ dp \\
  &\geq cVol(W_1)
\end{align*}
We also have 
\begin{align*}
    \int_{B(p_0,4)}\int_{S_pM}\int_{0}^{\beta_1(u)} F(u,s) ds \ d_pu  \ dp  &\leq Vol(B(p_0,5)^2 \\
\end{align*}
which is clear from the definition of $F$ and $\beta_1$, and from \eqref{def3}.
It then follows that
\begin{align*}
    &\int_{0}^{1}\int_{B(p_0,3)}\int_{W_{1,p}} \int_{0}^{1-t} F(\phi|_t(u),s) ds \ d_pu  \ dp \ dt \\
    &=\int_{0}^{1}\int_{W_1} \int_{0}^{1-t} F(\phi|_t(u),s) ds \ du \ dt \\
    &= \int_{0}^{1}\int_{\phi|_t(W_1)} \int_{0}^{1-t} F(u,s) ds \ du \ dt \\
    &\leq  \int_{0}^{1}\int_{B(p_0,4)}\int_{S_pM} \int_{0}^{\beta_1(u)} F(u,s) ds \ d_pu  \ dp \ dt \\
    &\leq CVol(B(p_0,5))^2\\
\end{align*}
The second to last line follows since all elements $u=(p,u_p)\in W_1$ correspond to vectors $v$ of magnitude $|v|=1$ based at some $p\in B(p_0,3)$. Therefore, the image under the geodesic flow at time $t$ of such an element is $(q,u_q)$ for some $u_q$ with $|u_q|=1$ based at $q$ for some $q\in B(p_0,4)$ if $t<1$. 

The two inequalities give the desired result.
\end{proof}

\begin{proof}[Proof of lemma 2.2]
Notice that for all $p\in B(p_0,2)$ and $v \in W_{t,p}$, for $1<t<2$,  $(p,-v)\in \phi|_t(W_t) \cap S_pM$, which is clear by reparameterizing the relevant geodesics to go in the opposite direction. Therefore,

$$d_pu (W_{t,p})\geq 2\delta $$
implies
$$d_pu(\phi|_t(W_t)\cap S_pM)\geq 2\delta$$
Therefore, if we let $\mu:=\frac{\delta}{C_1}$ with $C_1$ as in \eqref{eq7},
then
$$d_pu(\phi|_t(W_t) \cap S_{p,\mu} )=d_pu(\phi|_t(W_t)\cap S_pM)-d_pu(\phi|_t(W_t)\cap (S_pM\backslash S_{p,\mu}))\geq 2\delta-\delta \geq \delta$$
Therefore we only need to show 
$$d_pu (W_{t,p})\geq \delta $$ with $\delta_1$ on the order of $Vol(B(p_0,2))$ to complete the proof. Furthermore, observe that if we show the result for $t=2$, it immediately follows for $t<2$. \\
For any $p\in B(p_0,5)$
$$Vol(B(p,10))\geq vol(B(p_0,5))$$ 
and therefore, by Bishop-Gromov volume comparison,
$$ Vol(B(p,1))\geq C(n)vol(B(p_0,2))$$
By the diameter assumption on $M$, for any $p\in B(p_0,2)$ there exists $q\in B(p_0,5)$ such that $d(p,q)=3$. We must similarly have 
$$Vol(B(q,1))\geq C(n)Vol(B(p,4) \geq  C(n)vol(B(p_0,2))$$
But, once more using Bishop-Gromov and letting $V^n_{-1}$ denote the volume of the sphere of radius 1 in $n$-dimensional hyperbolic space, this implies
\begin{align*}
    \ 2\cdot 4^n V^n_{-1}vol(W_{2,p}) &\geq \int_{2}^{\beta_2(x)}\int_{\frac{1}{2}W_{2,p}} F(u,s)d_pu \ ds\\
    &\geq Vol(B(q,1)) \\
\end{align*}
implying
$$W_{2,p}\geq CVol(B(p_0,2))=:2\delta$$
where $F$ is defined in the proof of lemma 2.1.

\end{proof}

\end{document}